\newtheorem{thm}{Theorem}[section]
\newtheorem{prop}[thm]{Proposition}
\newtheorem{lem}[thm]{Lemma}
\newtheorem{cor}[thm]{Corollary}
\theoremstyle{definition}
\newcommand{\mbb}{\mathbb}
\newcommand{\pa}{\partial}
\newcommand{\mf}{\mathbb}
\newcommand{\Om}{\Omega}
\newcommand{\al}{\alpha}
\newcommand{\be}{\beta}
\newcommand{\ti}{\tilde}
\renewcommand{\Re}{\operatorname{Re}}
\newcommand{\diam}{\operatorname{diam}}
\newcommand{\dist}{\operatorname{dist}}
\numberwithin{equation}{section}
\title[Geometric estimates for Eisenman volume elements]{Geometric estimates and comparability of Eisenman volume elements with the Bergman kernel on ($\mathbb{C}$-)convex domains}
\keywords{Carath\'eodory-Eisenman volume element, Kobayashi-Eisenman volume element, Bergman kernel, $\mathbb{C}$-convex domains}
\subjclass[2020]{32F45, 32F32, 32A25}
\author{Debaprasanna Kar}
\address{Department of Mathematics, Indian Institute of Science, Bangalore-560012, India}
\email{debaprasanna@iisc.ac.in}
\begin{document}
\maketitle

\begin{abstract}
We establish geometric upper and lower estimates for the Carath\'eodory and Kobayashi-Eisenman volume elements on the class of non-degenerate convex domains, as well as on the more general class of non-degenerate $\mathbb{C}$-convex domains. As a consequence, we obtain explicit universal lower bounds for the quotient invariant both on non-degenerate convex and $\mathbb{C}$-convex domains. Here the bounds we derive, for the above mentioned classes in $\mathbb{C}^{n}$, only depend on the dimension $n$ for a fixed $n\geq 2$. Finally, it is shown that the Bergman kernel is comparable with these volume elements up to small/large constants depending only on $n$.
\end{abstract}

\section{Introduction, Motivation and Main results} \label{intro}

We denote the unit disc in the complex plane by $\mbb D$, the unit ball and the unit polydisc in $\mbb C^n$ by $\mbb B^n$ and $\mbb D^n$ respectively. For a domain $D\subset\mbb{C}^n$, the \textit{Carath\'eodory} and \textit{Kobayashi-Eisenman volume elements} on $D$ at a point $z^0 \in D$ are defined respectively by
\begin{align*}
c_D\left(z^0\right) & = \sup \Big\{ \big\vert \det \psi^{\prime}\left(z^0\right) \big\vert^2 : \psi \in \mathcal{O}(D,\mbb{B}^n), \psi\left(z^0\right) = 0 \Big\},\\
k_D\left(z^0\right) &= \inf \Big\{\big\vert \det \psi^{\prime}(0)\big\vert^{-2 } : \psi \in \mathcal{O}(\mbb{B}^n, D), \psi(0)=z^0 \Big\}.
\end{align*}
In the definitions above, $\det \psi'$ denotes the complex Jacobian determinant of a holomorphic map $\psi$. By a normal family argument one can show that $c_D\left(z^0\right)$ is always attained, and if $D$ is taut, then $k_D\left(z^0\right)$ is also attained. It is possible that $c_D$ can identically be 0, for example if $D=\mbb C$; but is strictly positive if $D$ is not a Liouville domain. Likewise $k_D$ can also vanish, but if $D$ is bounded, by invoking Cauchy's estimates we see that $k_D>0$. Under a holomorphic map $F:D_1 \to D_2$, the volume elements satisfy the rule
\begin{align}\label{ineq}
v_{D_1}(z) \geq \big\vert \det F^{\prime}(z)\big\vert^2 v_{D_2}\big(F(z)\big),
\end{align}
where $v=c$ or $k$. This, in particular, implies the monotonicity property of  the volume elements, i.e. for domains $D_1\subset D_2 \subset \mbb C^n,\, v_{D_1}(z)\geq v_{D_2}(z)$ for all $z\in D_1$. Again, the inequality in (\ref{ineq}) becomes an equality when $F$ is a biholomorphism. Therefore, if $k_D$ is non-vanishing (which is the case when $D$ is bounded or a taut domain), then
\[
q_D(z)=\frac{c_D(z)}{k_D(z)}
\]
is a biholomorphic invariant and is called the \textit{quotient invariant}. The Carath\'eodory-Cartan-Kaup-Wu theorem shows that the Kobayashi-Eisenman volume element always dominates the Carath\'eodory one, and therefore $q_D \leq 1$ for any domain $D\subset \mbb C^n$. If $D=\mbb{B}^n$, then
\[
c_{\mbb{B}^n}(z)=k_{\mbb{B}^n}(z)=\big(1-\vert z \vert^2\big)^{-n-1},
\]
and thus $q_{\mbb{B}^n}$ is identically equal to $1$. It is a remarkable result that if $D$ is any domain in $\mbb{C}^n$ and $q_D\left(z^0\right)=1$ for some point $z^0 \in D$, then $q_D \equiv 1$ and $D$ is biholomorphic to $\mbb{B}^n$. This result has been proved by many authors with several conditions assumed on the domain $D$ (cf. \cite{Wong} when $D$ is bounded and complete hyperbolic, \cite{Ro} when $D$ is only assumed to be bounded, \cite{Dek} when $D$ is hyperbolic). Later the work of Graham and Wu \cite{Gr-Wu} showed that no assumption on $D$ is required for the result to be true, in fact, it holds true for complex manifolds. Thus $q_D$ measures the extent to which the Riemann mapping theorem fails for $D$.

Recall that a domain $D\subset \mbb C^n$ is called \textit{$\mbb C$-convex} if $D\cap l$ is a simply connected domain for each complex affine line $l$ passing through $D$. Although $\mbb C$-convexity can be thought as a complex analog of the concept of convexity, the inherent geometry and regularity phenomena occurring in case of $\mbb C$-convex domains are much more intriguing than those for the convex ones. One of the crucial properties of $\mbb C$-convexity that we will use later is that $\mbb C$-convex domains are necessarily \textit{linearly convex}. A domain $D\subset \mbb C^n$ is said to be linearly convex if for any $p\in \mbb C^n\setminus D$ there exists a complex hyperplane passing through $p$ which does not intersect $D$. We have the following set of inclusions
\[
\text{convex domains}\subset\mbb C\text{-convex domains} \subset \text{linearly convex domains}.
\]
Moreover, the above inclusions are strict. For example, any planar domain vacuously satisfies the criteria of a linearly convex domain, whereas any planar annulus is not $\mbb C$-convex. Similarly, any contractible non-convex planar domain is $\mbb C$-convex, but obviously not convex by consideration. In \cite{Nik-Pflug-Zwonek}, Nikolov \textit{et al.} showed that the symmetrized bidisc is a $\mbb C$-convex domain in $\mbb C^2$ which is not biholomorphic to any convex domain.

We say that a domain is \textit{non-degenerate} if it contains no complex lines. The purpose of this article is to establish geometric upper and lower estimates for the Carath\'eodory and Kobayashi-Eisenman volume elements on non-degenerate convex and $\mbb C$-convex domains. This set of estimates, as we will see, captures the asymptotic behavior of the volume elements on those aforementioned domains. In this process, we will derive two separate uniform lower bounds, one for the class of non-degenerate convex domains and the other for non-degenerate $\mbb C$-convex domains, for the quotient invariant. Here the bounds are uniform in the sense that  the derived lower bounds hold true for all domains in $\mbb C^n$ in the stated class for a fixed $n\geq 2$. More specifically, given $n\geq 2$, we provide two positive real numbers $\mu_n$ and $\nu_n$ such that for any non-degenerate convex domain $G\subset \mbb C^n$ and for any non-degenerate $\mbb C$-convex domain $\ti G\subset \mbb C^n$,
\[
q_G(z)\geq \mu_n,\; q_{\ti G}(w)\geq \nu_n \quad \forall z\in G\;\,\text{and}\;\,\forall w\in \ti G.
\]
Note that the quotient invariant, as well as the volume elements, have been extensively studied near the boundary of several classes of domains. In \cites{Cheung-Wong, Gr-Kr1, Ma} the authors studied the boundary behavior on strongly pseudoconvex domains. Nikolov, in \cite{Nik-sq}, computed non-tangential boundary asymptotics of both the volume elements near h-extendible boundary points. Recently in \cite{Borah-Kar-1}, boundary asymptotics of the Kobayashi-Eisenman volume element have been obtained in terms of distinguished polydiscs of McNeal and Catlin near convex finite type and Levi corank one boundary points respectively.

The results in this note are motivated by a recent article of Bharali and Nikolov \cite{Bharali-Nik}, where the authors prove \textit{homogeneous regularity} of non-degenerate $\mbb C$-convex domains by providing explicit lower bounds for the squeezing function. A domain $D\subset \mbb C^n$ is said to be homogeneous regular if the squeezing function $s_D$ satisfies $s_D(z)\geq c$ for a fixed positive constant $c$, for all $z\in D$. We preferably want correlations between the squeezing function and the volume elements since these objects are defined through maps from/into the unit ball, but unfortunately due to lack of a higher-dimensional analogue of the Koebe 1/4 theorem, we do not have much! However, in 2013, Deng, Guan and Zhang established a relationship between the squeezing function and the volume elements on bounded domains in $\mbb C^n$ (see Theorem~3.1 of \cite{DGZ}). In particular, on a bounded domain $D\subset \mbb C^n$, they showed
\begin{align*}
s_D^{2n}(z) v_D'(z)\leq v_D(z) \leq \dfrac{1}{s_D^{2n}(z)} v_D'(z),
\end{align*}
for $v_D, v_D'\in \{c_D, k_D\}$. The inequality above implicitly infer that if a bounded domain $D$ is homogeneous regular, then the quotient invariant on $D$ is bounded below by a positive constant. The results we obtain in this article will extend this phenomena to certain classes of unbounded domains. To be precise we will show that on non-degenerate convex and $\mbb C$-convex domains, which may not be bounded and which are shown to be homogeneous regular in \cite{Bharali-Nik}, the quotient invariant is bounded below by uniform positive constants (see Corollaries~\ref{q convex} and~\ref{q C-convex} below).

\medskip
Let $D\subset \mbb C^n$ be a non-degenerate domain and $z^0\in D$. We denote the boundary of $D$ as $\pa D$ and $H_0:=\mbb C^n$. Let $\tau_{1,D}(z^0)$ be the minimum distance of $z^0$ to $\pa D$ and $p^1$ be a point on $\pa D$ realising this distance. Let $H_{1}$ be the complex hyperplane passing through $z^0$ orthogonal to the vector $p^1-z^0$. Clearly,
\begin{align*}
    H_0= H_1 \oplus \text{span}_{\mbb C}\left\{p^1-z^0\right\}.
\end{align*}
Now, let $\tau_{2,D}(z^0)$ be the minimum distance of $z^0$ to $\pa D \cap H_1$ and $p^2$ be a point on $\pa D \cap H_1$ realising this distance. Then we carry out similar construction, replacing $H_1$ by $H_2$ in the next step, where
\begin{align*}
    H_1=H_2 \oplus \text{span}_{\mbb C}\left\{ p^2-z^0\right\}.
\end{align*}
Continuing in this way, we define $\mbb C$-linear subspaces $H_0,\ldots,H_{n-1}$, positive numbers $\tau_{1,D}(z^0),\ldots,$ $\tau_{n,D}(z^0)$, and the points $p^1,\ldots,p^n$ on $\pa D$. Here are some observations: the choices of $p^1,\ldots,p^n$ are not unique for a fixed $z^0$; the points $\{p^1,\ldots,p^n\}\subset \pa D$ and the subspaces $H_0,\ldots,H_{n-1}$ obviously vary depending on different points $z\in D$; the numbers $\tau_{1,D}(z^0)=\|p^1-z^0\|,\ldots,\tau_{n,D}(z^0)=\|p^n-z^0\|$ are all finite because of the assumption that $D$ is non-degenerate. Define
\begin{align*}
p_D\left(z^0\right):= \tau_{1,D}\left(z^0\right)\cdots\tau_{n,D}\left(z^0\right).
\end{align*}
We have the following geometric estimates for the volume elements on the class of convex domains:

\begin{thm}\label{ge convex}
Let $D$ be a non-degenerate convex domain in $\mbb C^n, n\geq 2$ and $z\in D$. Then
\begin{align*}
\dfrac{1}{(4n)^n} \leq v_D(z) p_D^2(z) \leq \left(\dfrac{4^n-1}{3}\right)^n,
\end{align*}
for $v_D=c_D$ or $k_D$.
\end{thm}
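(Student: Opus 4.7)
The plan is to fix $z\in D$ and apply a unitary change of coordinates (under which $c_D,k_D,p_D$ are all invariant) so that $z=0$ and $e_k:=(p^k-z)/\tau_k$ is the $k$-th standard basis vector of $\mathbb{C}^n$; thus $p^k=\tau_k e_k$ and $H_{k-1}=\operatorname{span}_{\mathbb{C}}\{e_k,\ldots,e_n\}$. The decisive geometric input is that, by convexity of $D\cap H_{k-1}$ and the definition of $\tau_k$, the Euclidean ball of radius $\tau_k$ in $H_{k-1}$ about $0$ lies in $D\cap H_{k-1}$; in particular, each complex disc $\Delta_k:=\{se_k:|s|<\tau_k\}$ is contained in $D$. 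Since $c_D\leq k_D$, the upper bound reduces to an upper estimate of $k_D$, and the lower bound to a lower estimate of $c_D$.

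For the upper bound I would inscribe a polydisc in $D$ via convex combinations: every $w$ with $|w_k|<\tau_k/n$ can be written as $w=\sum_k(1/n)(n w_k e_k)$, a convex combination of points $n w_k e_k\in\Delta_k\subset D$, so $P=\prod_k\{|\zeta_k|<\tau_k/n\}\subset D$, and monotonicity gives $k_D(z)\leq k_P(z)=n^{2n}/p_D^2(z)\leq\bigl((4^n-1)/3\bigr)^n/p_D^2(z)$. For the lower bound I would construct $n$ bounded holomorphic functions on $D$ from complex supports with a triangular structure. The main claim is that for each $k$ there exists a unit vector $v_k\in\operatorname{span}_{\mathbb{C}}\{e_1,\ldots,e_k\}$ and a complex affine functional $\ell_k(w)=\langle w-p^k,v_k\rangle$ with $D\subset\{\Re\ell_k<0\}$. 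To establish this, observe that for any $y\in H_k$ one has $\langle p^k+y,e_k\rangle=\tau_k$ (using $y\perp e_k$), so $p^k+y\in H_{k-1}$ lies on the supporting hyperplane of $D\cap H_{k-1}$ at $p^k$ within $H_{k-1}$ (the one with outward normal $e_k$, guaranteed by the fact that $p^k$ is the closest boundary point of $D\cap H_{k-1}$ to $0$), hence $p^k+y\notin D$; this shows $p^k\notin\pi_k(D)$, where $\pi_k$ denotes orthogonal projection onto $\operatorname{span}_{\mathbb{C}}\{e_1,\ldots,e_k\}$, and Hahn-Banach separation performed inside $\operatorname{span}_{\mathbb{C}}\{e_1,\ldots,e_k\}$ produces the desired $v_k$.

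Setting $c_k:=-\Re\ell_k(0)=\tau_k\Re\langle e_k,v_k\rangle>0$ and composing $\ell_k$ with the Möbius map $g_k(\zeta)=(\zeta+c_k)/(\zeta-c_k):\{\Re\zeta<0\}\to\mathbb{D}$ (which satisfies $g_k(-c_k)=0$ and $|g_k'(-c_k)|=1/(2c_k)$) yields $f_k=g_k\circ\ell_k\in\mathcal{O}(D,\mathbb{D})$ with $f_k(0)=0$. Using $\mathbb{D}^n\subset\sqrt{n}\,\mathbb{B}^n$, the map $\Psi:=n^{-1/2}(f_1,\ldots,f_n):D\to\mathbb{B}^n$ satisfies $\Psi(0)=0$, and a direct determinant computation gives
\[
c_D(z)\geq|\det\Psi'(0)|^2=\frac{|\det V|^2}{(4n)^n\,p_D^2(z)\prod_k(\Re\langle e_k,v_k\rangle)^2},
\]
where $V$ is the $n\times n$ matrix whose $k$-th row is $v_k$. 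Because $v_k\in\operatorname{span}_{\mathbb{C}}\{e_1,\ldots,e_k\}$, $V$ is lower triangular with diagonal entries $v_k^{(k)}$ satisfying $|v_k^{(k)}|\geq\Re v_k^{(k)}=\Re\langle e_k,v_k\rangle>0$, so $|\det V|^2\geq\prod_k(\Re\langle e_k,v_k\rangle)^2$ and $c_D(z)p_D^2(z)\geq 1/(4n)^n$, as required. The main obstacle is the triangular-support existence claim, whose decisive input is the orthogonality $H_k\perp e_k$ combined with the minimality property of $p^k$ within $H_{k-1}$.
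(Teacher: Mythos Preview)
Your argument is correct, modulo one cosmetic slip: $k_P(0)=n^{2n}/p_D^2(z)$ is not literally an equality, since the Kobayashi--Eisenman volume of the polydisc at the origin is not known to equal $1$; what you actually use (and what suffices) is $k_{\mbb D^n}(0)\le 1$, which follows from $\mbb B^n\subset\mbb D^n$ and monotonicity, and then the elementary inequality $n^{2}\le(4^n-1)/3$ for $n\ge 2$.

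Your route differs from the paper's. The paper first applies the Bharali--Nikolov normalisation $A\circ T$, where $T$ sends each $p^j$ to $e_j$ and $A$ is lower triangular with unit diagonal and small off-diagonal entries, so that the supporting real hyperplanes become $\{\Re Z_j=1\}$. For the upper bound the paper uses the inclusion $E_n\subset T(D)$ and a lemma showing $(1/c_n)\mbb B^n\subset A(E_n)$ with $c_n^2=(4^n-1)/3$; this is where the constant $(4^n-1)/3$ originates. Your approach bypasses $A$, $E_n$ and that lemma entirely by inscribing the polydisc $\prod_k\{|\zeta_k|<\tau_k/n\}$ via convex combinations of the discs $\Delta_k$; this is more elementary and in fact yields the sharper intermediate bound $n^{2n}$, which you then relax to match the stated theorem. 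For the lower bound the two arguments are essentially the same idea executed differently: the paper reads off the half-spaces $\{\Re Z_j<1\}$ from the normalised picture and applies a single explicit M\"obius map, while you manufacture the half-spaces by hand, projecting onto $\operatorname{span}_{\mbb C}\{e_1,\dots,e_k\}$ and invoking Hahn--Banach, and then exploit the lower-triangular structure of $V$ via $|\det V|\ge\prod_k\Re v_k^{(k)}$. The triangular structure plays the same role in both proofs; the paper hides it inside the matrix $A$, you make it explicit in $V$.
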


Similar to $p_D$, there are several such geometrically defined numbers introduced (cf. \cite{Nik-Pflug-2003, Nik-Pflug-Zwonek2, Chen-1989, McNeal-2001}) to bound the Bergman kernel function in terms of small/large constants on various pseudoconvex domains. The following result is the counterpart of Theorem~\ref{ge convex} on $\mbb C$-convex domains:

\begin{thm}\label{ge C-convex}
Let $D$ be a non-degenerate $\mbb C$-convex domain in $\mbb C^n, n\geq 2$ and $z\in D$. Then
\begin{align*}
\dfrac{1}{(16n)^n} \leq v_D(z) p_D^2(z) \leq \left(\dfrac{4^n-1}{3}\right)^n,
\end{align*}
for $v_D=c_D$ or $k_D$.
\end{thm}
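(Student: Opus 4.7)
The strategy is to parallel the proof of Theorem~\ref{ge convex}, replacing each appeal to real convexity with the weaker but still potent tool of linear convexity, which applies because $\mathbb{C}$-convex domains are linearly convex. After a translation moving $z$ to the origin and a unitary rotation — both of which preserve the $\mathbb{C}$-convexity of $D$, the invariants $c_D, k_D$, and the quantities $\tau_{j,D}$ — one may assume $z=0$ and $p^j = \tau_j e_j$, where $\tau_j := \tau_{j,D}(0)$ and $\{e_1,\ldots,e_n\}$ is the standard basis. Since $c_D(0) \leq k_D(0)$ always, it is enough to establish the upper bound for $k_D(0)$ and the lower bound for $c_D(0)$.

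\textbf{Upper bound.} The one-dimensional slice $D \cap \{\lambda e_j : \lambda\in\mathbb{C}\}$ is simply connected (it is the intersection of the $\mathbb{C}$-convex set $D \cap H_{j-1}$ with a complex line) and contains the Euclidean disc of radius $\tau_j$ centred at $0$. This inscription input is identical to the one available in the convex case. I would therefore replay the construction from the proof of Theorem~\ref{ge convex} — peeling along the flag $H_0 \supset H_1 \supset \cdots \supset H_{n-1}$ and assembling inscribed analytic discs one dimension at a time — to produce $\psi\colon \mathbb{B}^n\to D$ with $\psi(0)=0$ and
\[
|\det\psi'(0)| \ge \frac{\tau_1\cdots\tau_n}{\bigl((4^n-1)/3\bigr)^{n/2}}.
\]
The arithmetic constant $(4^n-1)/3=\sum_{k=0}^{n-1}4^k$ arises from summing the rescalings applied at each of the $n$ inscription steps. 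Since this step only needs $\mathbb{C}$-convexity (not full convexity), the same upper constant $((4^n-1)/3)^n$ reappears here.

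\textbf{Lower bound.} Linear convexity supplies, for each $j$, a complex hyperplane $\Pi_j \ni p^j$ disjoint from $D$. Fix an affine form $L_j$ with $L_j(0)=0$ and $\Pi_j=\{L_j=1\}$. Then $L_j\colon D\to\mathbb{C}\setminus\{1\}$, and composing with a Koebe-type embedding of the simply connected component of $\mathbb{C}\setminus\{1\}$ containing $0$ into $\mathbb{D}$ produces a holomorphic $f_j\colon D\to\mathbb{D}$ with $f_j(0)=0$ whose derivative at $0$ is a controlled scalar multiple of $dL_j(0)$. Using the inclusion $\mathbb{D}^n\subset\sqrt{n}\,\mathbb{B}^n$, the normalised map
\[
\Psi := \tfrac{1}{\sqrt{n}}(f_1,\ldots,f_n)\colon D \to \mathbb{B}^n
\]
is a candidate for the supremum defining $c_D(0)$. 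Each $L_j$ contributes a factor $1/(4\tau_j)$ through the Koebe one-quarter theorem (as opposed to $1/\tau_j$ in the convex case, where one can map to a half-plane directly), and the rescaling by $1/\sqrt{n}$ contributes a further $1/n^n$ in the squared Jacobian, producing the advertised bound $c_D(0)\ge (16n)^{-n}/p_D^2(0)$.

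\textbf{Main obstacle.} The critical step is verifying $|\det\Psi'(0)|^2 \ge (16n)^{-n}p_D^{-2}(0)$, which amounts to a Gram-determinant bound for $\{dL_j(0)\}$. For convex $D$, the outward normal at $p^j$ is automatically parallel to $p^j-0=\tau_j e_j$, which diagonalises the system and makes the estimate transparent. For $\mathbb{C}$-convex $D$, the supporting hyperplane $\Pi_j$ furnished by linear convexity need not be orthogonal to $e_j$. The remedy is to exploit the fact that $p^j$ still minimises Euclidean distance from $0$ within the subspace $H_{j-1}$; this constrains how far $\Pi_j$ can tilt away from $e_j^{\perp}$ and keeps the Gram determinant under explicit control. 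Managing this tilt is the main technical hurdle, and is the step through which the extra $4^n$ factor (versus Theorem~\ref{ge convex}) enters via Koebe.
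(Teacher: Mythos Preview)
Your outline has the right architecture, but two genuine gaps remain. In the lower bound, $\mathbb{C}\setminus\{1\}$ is connected but \emph{not} simply connected, so there is no Riemann map onto $\mathbb{D}$ to compose with. The repair is to use the image $L_j(D)$ instead: linear projections of $\mathbb{C}$-convex domains are $\mathbb{C}$-convex (see \cite[Theorem~2.3.6]{APS}), hence simply connected in $\mathbb{C}$, and $1\notin L_j(D)$ gives $\dist(0,\partial L_j(D))\le 1$, so Koebe applies. This is exactly the paper's route after the normalization $A\circ T$, where the $L_j$ become the coordinate projections $\pi_j$ and $\Omega_j:=\pi_j(A\circ T(D))$. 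More seriously, you correctly flag the Gram-determinant obstacle but do not resolve it. The paper's answer is that no estimate is needed: Proposition~\ref{linear maps} (imported from \cite{Bharali-Nik}) shows the system $(L_1,\ldots,L_n)$ factors as $A\circ T$, where $T$ sends $p^j\mapsto e_j$ (so $|\det T'(0)|=1/p_D(0)$) and $A$ is lower triangular with unit diagonal (so $\det A=1$). Thus $|\det(L_1,\ldots,L_n)'(0)|=1/p_D(0)$ exactly. The minimality of each $p^j$ within $H_{j-1}$ is precisely what forces this triangular structure with $|\alpha_{j,k}|\le 1$; this is the rigorous form of the ``tilt constraint'' you gesture at, and establishing it is the nontrivial content of Proposition~\ref{linear maps}.

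Your upper bound is too vague to stand as written. Knowing that the one-dimensional slices through $0$ are simply connected and contain $\tau_j$-discs does not by itself inscribe a ball in $D$---there is no obvious way to ``assemble inscribed analytic discs one dimension at a time'' into a map $\mathbb{B}^n\to D$. The paper's argument is concrete and different in mechanism: linear convexity gives $E_n\subset T(D)$ via \cite[Lemma~15]{Nik-Pflug-Zwonek2}, and then the off-diagonal bound $|\alpha_{j,k}|\le 1$ from Proposition~\ref{linear maps} yields $(1/c_n)\mathbb{B}^n\subset A(E_n)\subset A\circ T(D)$. The constant $c_n^2=(4^n-1)/3=\sum_{k=0}^{n-1}4^k$ enters through a Cauchy--Schwarz estimate on $A^{-1}$, not through iterated inscription along the flag.
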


As consequences of Theorems~\ref{ge convex} and~\ref{ge C-convex}, the next two results derive uniform lower bounds for the quotient invariant on the classes of non-degenerate convex and non-degenerate $\mbb C$-convex domains respectively. Consider the first inequality in Theorem~\ref{ge convex} for $v_D=c_D$ and the second inequality there for $v_D=k_D$, and then divide them up to obtain:

\begin{cor}\label{q convex}
The quotient invariant $q_D$ admits the following lower bound on any non-degenerate convex domain $D \subset \mbb C^n,\, n\geq 2$:
\[
q_D(z)\geq \left(\dfrac{3}{4n(4^n-1)}\right)^n.
\]
\end{cor}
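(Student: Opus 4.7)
The plan is to follow exactly the recipe hinted at in the sentence preceding the statement: apply the two halves of Theorem~\ref{ge convex} to the two \emph{different} volume elements and take a ratio so that the geometric quantity $p_D^2(z)$ cancels, leaving behind a dimensional constant.

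More concretely, I would first observe that since $D$ is non-degenerate, each $\tau_{j,D}(z)$ is positive and finite, so $p_D^2(z)\in(0,\infty)$; this is what lets us divide two inequalities in which $p_D^2(z)$ appears as a common factor. I would also note in passing that $k_D(z)>0$ on a non-degenerate convex domain (such a $D$ is biholomorphic to a bounded domain, hence taut), so $q_D(z)$ is a well-defined quantity. Then I apply Theorem~\ref{ge convex} once with $v_D=c_D$, retaining only the lower bound
\[
c_D(z)\,p_D^2(z)\;\geq\;\dfrac{1}{(4n)^n},
\]
and a second time with $v_D=k_D$, retaining only the upper bound
\[
k_D(z)\,p_D^2(z)\;\leq\;\left(\dfrac{4^n-1}{3}\right)^{n}.
\]

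Dividing the first by the second, the factors $p_D^2(z)$ on the left cancel, giving
\[
q_D(z)\;=\;\dfrac{c_D(z)}{k_D(z)}\;\geq\;\dfrac{1/(4n)^n}{\big((4^n-1)/3\big)^{n}}\;=\;\left(\dfrac{3}{4n(4^n-1)}\right)^{n},
\]
which is the stated bound. There is no genuine obstacle here beyond the estimates already packaged into Theorem~\ref{ge convex}: the heart of the matter is that $p_D(z)$ is a \emph{single} geometric quantity that simultaneously controls $c_D$ from below and $k_D$ from above, and the uniformity of the constants in $n$ alone is exactly what delivers a lower bound on $q_D$ depending only on the dimension.
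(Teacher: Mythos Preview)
Your proposal is correct and is exactly the argument the paper gives: the paper simply instructs the reader to take the lower estimate in Theorem~\ref{ge convex} for $v_D=c_D$ and the upper estimate for $v_D=k_D$ and divide. Your additional remarks on the finiteness and positivity of $p_D^2(z)$ and of $k_D(z)$ are legitimate hygiene that the paper leaves implicit.
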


The same argument, when applied to Theorem~\ref{ge C-convex}, produces the following uniform lower bound on $\mbb C$-convex domains in $\mbb C^n$:

\begin{cor}\label{q C-convex}
The quotient invariant $q_D$ admits the following lower bound on any non-degenerate $\mbb C$-convex domain $D \subset \mbb C^n,\, n\geq 2$:
\[
q_D(z)\geq \left(\dfrac{3}{16n(4^n-1)}\right)^n.
\]
\end{cor}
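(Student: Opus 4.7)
The plan is to mimic verbatim the derivation of Corollary~\ref{q convex} from Theorem~\ref{ge convex}, but this time drawing on the $\mbb C$-convex estimates provided by Theorem~\ref{ge C-convex}. All of the analytic and geometric content has already been absorbed into that theorem; the present corollary amounts to the bookkeeping observation that, when one divides a lower bound on $c_D(z)\,p_D^2(z)$ by an upper bound on $k_D(z)\,p_D^2(z)$, the common geometric factor $p_D^2(z)$ cancels and leaves a constant depending only on $n$.

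Concretely, I would fix a non-degenerate $\mbb C$-convex domain $D\subset\mbb C^n$ with $n\geq 2$ and a point $z\in D$. Applying the left-hand inequality of Theorem~\ref{ge C-convex} with $v_D=c_D$ gives
\[
c_D(z)\,p_D^2(z)\geq \frac{1}{(16n)^n},
\]
and applying the right-hand inequality of the same theorem with $v_D=k_D$ gives
\[
k_D(z)\,p_D^2(z)\leq \left(\frac{4^n-1}{3}\right)^n.
\]
Since $D$ is non-degenerate, $p_D(z)$ is finite and positive; the lower bound of Theorem~\ref{ge C-convex} applied to $k_D$ moreover shows $k_D(z)\geq (16n)^{-n}\,p_D^{-2}(z)>0$, so that $q_D(z)$ is a well-defined positive quantity. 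Dividing the first inequality above by the second then yields
\[
q_D(z)=\frac{c_D(z)}{k_D(z)}\geq \frac{1}{(16n)^n}\cdot\frac{3^n}{(4^n-1)^n}=\left(\frac{3}{16n(4^n-1)}\right)^n,
\]
which is the stated bound; since $z\in D$ was arbitrary, this holds pointwise on $D$.

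I do not foresee a genuine obstacle: once Theorem~\ref{ge C-convex} is in hand, the corollary is a one-line manipulation. The only small point that needs attention is justifying $k_D(z)>0$ (so that $q_D$ makes sense at every $z\in D$), but, as noted above, this falls directly out of the very inequalities being combined. Everything substantive has been packaged into Theorem~\ref{ge C-convex}, and the present corollary is essentially free from it.
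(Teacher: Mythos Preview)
Your argument is correct and is exactly the approach taken in the paper: one takes the lower bound in Theorem~\ref{ge C-convex} with $v_D=c_D$ and the upper bound with $v_D=k_D$, divides, and the factor $p_D^2(z)$ cancels. Your added remark verifying $k_D(z)>0$ is a welcome bit of care, but otherwise the two proofs are identical.
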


Here one can easily notice that the lower bounds obtained in the setting of convex domains are much stricter compared to those in the setting of $\mbb C$-convex domains (for example: the bound for the quotient invariant obtained above on the class of non-degenerate convex domains is $4^n$-times greater than that on non-degenerate $\mbb C$-convex domains). This should not come as a surprise to anyone since any convex domain is $\mbb C$-convex.

The next two results are also consequences of Theorems~\ref{ge convex} and~\ref{ge C-convex} which produce lower bounds for the volume elements on bounded convex and $\mbb C$-convex domains. Here the bounds deduced are not uniform as they depend on the domain under consideration.

\begin{cor}\label{v convex}
Let $D$ be a bounded convex domain in $\mbb C^n, n \geq 2$. Then
\[
v_D(z)\geq \dfrac{1}{\left(4n\diam^2(D)\right)^n},
\]
for $v_D=c_D$ or $k_D$. Here $\diam(D)$ stands for the diameter of the domain $D$.
\end{cor}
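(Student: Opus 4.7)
The plan is to deduce the corollary directly from the left-hand inequality of Theorem~\ref{ge convex}, namely
\[
v_D(z) \geq \frac{1}{(4n)^n\, p_D^2(z)},
\]
by bounding $p_D(z)$ from above in terms of $\diam(D)$. Since $D$ is bounded, each factor $\tau_{i,D}(z)$ appearing in the definition of $p_D(z)$ is finite, and the whole argument reduces to a trivial geometric estimate.

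The key step is to observe that for every $i = 1,\dots,n$, the number $\tau_{i,D}(z) = \|p^i - z\|$ is the distance from $z$ to a point $p^i \in \pa D$. Since $z \in D$ and $p^i \in \overline{D}$, one has $\tau_{i,D}(z) \leq \diam(D)$. Multiplying these $n$ inequalities yields $p_D(z) \leq \diam(D)^n$, so that $p_D^2(z) \leq \diam(D)^{2n}$.

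Substituting this bound into the inequality from Theorem~\ref{ge convex}, I obtain
\[
v_D(z) \geq \frac{1}{(4n)^n\, p_D^2(z)} \geq \frac{1}{(4n)^n\, \diam(D)^{2n}} = \frac{1}{\bigl(4n\, \diam^2(D)\bigr)^n},
\]
which is the desired inequality. There is no real obstacle here: the only point that deserves a word of care is the inequality $\tau_{i,D}(z) \leq \diam(D)$, which holds because each $p^i$ lies on $\pa D \subset \overline{D}$, so boundedness of $D$ immediately controls all the $\tau_{i,D}(z)$. Note that exactly the same argument, combined with Theorem~\ref{ge C-convex} in place of Theorem~\ref{ge convex}, would give the analogous (slightly weaker) bound $v_D(z) \geq \bigl(16n\,\diam^2(D)\bigr)^{-n}$ on bounded $\mbb C$-convex domains.
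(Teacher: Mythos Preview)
Your proof is correct and follows exactly the same line as the paper's: bound each $\tau_{i,D}(z)=\|p^i-z\|$ by $\diam(D)$ (since $z\in D$ and $p^i\in\pa D\subset\overline{D}$), deduce $p_D^2(z)\leq \diam^{2n}(D)$, and plug into the left-hand inequality of Theorem~\ref{ge convex}. The paper also notes, as you do, that the identical argument with Theorem~\ref{ge C-convex} yields Corollary~\ref{v C-convex}.
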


\begin{cor}\label{v C-convex}
Let $D$ be a bounded $\mbb C$-convex domain in $\mbb C^n, n\geq 2$. Then
\[
v_D(z)\geq \dfrac{1}{\left(16n\diam^2(D)\right)^n},
\]
for $v_D=c_D$ or $k_D$.
\end{cor}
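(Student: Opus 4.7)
The plan is to deduce Corollary~\ref{v C-convex} as an immediate consequence of the lower bound in Theorem~\ref{ge C-convex}. That theorem gives
\[
v_D(z)\geq \dfrac{1}{(16n)^n\,p_D^2(z)}
\]
for $v_D=c_D$ or $k_D$ on any non-degenerate $\mbb C$-convex domain. Since every bounded domain is non-degenerate (a domain containing a complex line is unbounded), Theorem~\ref{ge C-convex} applies, and the only remaining task is to bound $p_D(z)$ from above by a suitable power of $\diam(D)$.

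To do this I would simply unwind the definition $p_D(z)=\tau_{1,D}(z)\cdots\tau_{n,D}(z)$ and use the construction of the $\tau_{j,D}$ recalled in the paper: each $\tau_{j,D}(z)$ is attained as $\tau_{j,D}(z)=\|p^j-z\|$ for some point $p^j\in\pa D\cap H_{j-1}\subset\pa D$. Since $z\in D$ and $p^j\in\pa D\subset\overline{D}$, we have $\|p^j-z\|\leq\diam(D)$ for every $j=1,\ldots,n$. Multiplying gives $p_D(z)\leq \diam(D)^n$, so $p_D^2(z)\leq \diam^{2n}(D)$.

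Plugging this into the lower estimate yields
\[
v_D(z)\geq \frac{1}{(16n)^n\,p_D^2(z)}\geq \frac{1}{(16n)^n\,\diam^{2n}(D)}=\frac{1}{\bigl(16n\,\diam^2(D)\bigr)^n},
\]
which is exactly the claim. There is no real obstacle in this deduction: all the substantive work is absorbed into Theorem~\ref{ge C-convex}, and the passage from $p_D(z)$ to $\diam(D)$ is a one-line consequence of the fact that the distances $\tau_{j,D}(z)$ are attained between points of $D$ and of $\overline{D}$. The same argument (using Theorem~\ref{ge convex} in place of Theorem~\ref{ge C-convex}) would give Corollary~\ref{v convex}, which explains why the two corollaries differ only in the constant $4n$ versus $16n$.
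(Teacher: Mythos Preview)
Your proof is correct and follows exactly the same route as the paper's own proof: bound each $\tau_{j,D}(z)=\|p^j-z\|$ by $\diam(D)$, deduce $p_D^2(z)\leq\diam^{2n}(D)$, and plug this into the lower estimate of Theorem~\ref{ge C-convex}. Your remark that boundedness implies non-degeneracy is a small clarification the paper leaves implicit, but otherwise the arguments are identical.
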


\begin{proof}[Proofs of Corollary~\ref{v convex} and Corollary~\ref{v C-convex}]
Observe that when $D$ is bounded, $$\tau_{j,D}(z)=\|p^j-z\|\leq \diam(D)<+\infty$$ for each $z\in D$ and $j=1,\ldots,n$. Thus $p_D^2(z)\leq \diam^{2n}(D)$. Combining this observation with the first inequalities from Theorem~\ref{ge convex} and Theorem~\ref{ge C-convex} proves Corollary~\ref{v convex} and Corollary~\ref{v C-convex} respectively.
\end{proof}

Before stating our next results, recall that for a domain $D\subset\mbb{C}^n$, the space
\[
A^2(D)=\left\{\text{$f: D \to \mf{C}$ holomorphic and $\|f\|^2_D:=\int_{D} \vert f\vert^2 \, dV< \infty$} \right\},
\]
where $dV$ is the Lebesgue measure on $\mf{C}^n$, is a closed subspace of $L^2(D)$, and hence is a Hilbert space. It is called the \textit{Bergman space} of $D$. $A^2(D)$ carries a reproducing kernel $K_D(z,w)$ called the \textit{Bergman kernel} for $D$. Let $K_D(z):=K_D(z,z)$ be its restriction to the diagonal of $D$. It is well-known (see \cite{Jarn-Pflug-2013}) that 
\begin{align*}
K_D(z)=\sup\big\{|f(z)|^2: f\in A^2(D), \|f\|_D\leq 1\big\}.
\end{align*}
When $D$ is bounded, one easily sees that $K_D$ is a positive function on $D$. Let $D\subset \mbb C^n$ be a convex (or, $\mbb C$-convex) domain. If $D$ contains a complex line, then it is linearly equivalent to the Cartesian product of $\mbb C$ and a convex (or, $\mbb C$-convex) domain in $\mbb C^{n-1}$. In both the cases $K_D\equiv 0$. In our consideration since $D$ is non-degenerate, it is biholomorphic to a bounded domain (cf. \cite{Jarn-Pflug-2013}), and hence $K_D>0$. 

Now we prove comparability of the Bergman kernel (on the diagonal) with the volume elements on non-degenerate convex and $\mbb C$-convex domains in $\mbb C^n$. Our aim is to bound the ratio of either of these two volume elements and the Bergman kernel by small/large constants depending only on $n$.

\begin{thm}\label{com convex}
Let $D$ be a non-degenerate convex domain in $\mbb C^n, n\geq 2$. Then
\begin{align*}
\dfrac{\pi^n}{(2n)! (2n)^n}\leq \dfrac{v_D(z)}{K_D(z)} \leq \left(\dfrac{4\pi (4^n-1)}{3}\right)^n,
\end{align*}
for $v_D=c_D$ or $k_D$.
\end{thm}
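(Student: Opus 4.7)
I would deduce Theorem~\ref{com convex} from Theorem~\ref{ge convex} by sandwiching the Bergman kernel against $1/p_D^2$:
\[
\frac{1}{(4\pi)^n\,p_D^2(z)} \;\le\; K_D(z) \;\le\; \frac{(2n)!}{(2\pi)^n\,p_D^2(z)}, \qquad (\ast)
\]
after which dividing the inequalities of Theorem~\ref{ge convex} by those of $(\ast)$ yields the two bounds claimed for $v_D(z)/K_D(z)$. After translating $z$ to the origin and applying a unitary change of coordinates, I may assume $p^{\,j}=\tau_j e_j$, so that $H_{j-1}=\operatorname{span}_{\mathbb C}\{e_j,\dots,e_n\}$ and $B_{H_{j-1}}(0,\tau_j)\subset D$ for each $j$.

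The upper bound in $(\ast)$ follows from an inscribed-set computation. The Reinhardt $\ell^1$-ball $R:=\{w\in\mathbb C^n:\sum_j |w_j|/\tau_j<1\}$ lies inside $D$ by convexity: for $w\in R$ with $\mu_j:=|w_j|/\tau_j$ and $\mu_0:=1-\sum_j\mu_j>0$, write $w$ as the convex combination $\mu_0\cdot 0 + \sum_j \mu_j\cdot (w_j/\mu_j)\,e_j$ of $0\in D$ and the points $(w_j/\mu_j)\,e_j\in\overline{B_{H_{j-1}}(0,\tau_j)}\subset\overline D$. A direct Dirichlet-integral computation gives $\operatorname{vol}(R)=(2\pi)^n p_D^2(0)/(2n)!$. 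Because $R$ is Reinhardt, monomials are orthogonal in $A^2(R)$, so for every $f\in A^2(D)$ with $f(0)=1$ we have $\|f\|_D^2\ge\|f\|_R^2\ge\operatorname{vol}(R)$; taking the infimum gives $K_D(0)\le 1/\operatorname{vol}(R)$, which is the upper half of $(\ast)$.

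For the lower bound in $(\ast)$, I would construct an injective holomorphic map $\phi:D\to\mathbb{D}^n$ with $\phi(0)=0$ and $|\det\phi'(0)|^2=1/(4^n p_D^2(0))$. The basic building block is the one-variable M\"obius map $w_1\mapsto w_1/(w_1-2\tau_1)$, which sends the real supporting half-plane $\{\Re w_1<\tau_1\}\supset D$ biholomorphically onto $\mathbb D$ with derivative $-1/(2\tau_1)$ at the origin; iterating along the successive canonical directions $e_2,\dots,e_n$ within the subspaces $H_1,\dots,H_{n-1}$ (in the spirit of the Bharali--Nikolov squeezing construction~\cite{Bharali-Nik}) produces $\phi$ with the stated Jacobian and image inside $\mathbb{D}^n$. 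Since $\phi$ is a biholomorphism onto $\phi(D)\subset\mathbb{D}^n$, the transformation law for the Bergman kernel combined with the monotonicity $K_{\phi(D)}(0)\ge K_{\mathbb{D}^n}(0)=1/\pi^n$ gives
\[
K_D(0) \;=\; K_{\phi(D)}(0)\,|\det\phi'(0)|^2 \;\ge\; K_{\mathbb{D}^n}(0)\,|\det\phi'(0)|^2 \;=\; \frac{1}{\pi^n}\cdot\frac{1}{4^n\,p_D^2(0)} \;=\; \frac{1}{(4\pi)^n p_D^2(0)}.
\]

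The main obstacle lies in ensuring that $\phi(D)$ sits inside the polydisc $\mathbb{D}^n$ rather than a rescaling of the ball $B^n$: composing with an additional $\sqrt n$-rescaling to land in $B^n$ would cost a factor of $n^n$ in the Jacobian and spoil the target constant (exactly recovering Theorem~\ref{ge convex}'s bound on $c_D$ via $K_{B^n}(0)=n!/\pi^n$, but nothing sharper). Staying inside $\mathbb{D}^n$ forces one to iterate the one-variable M\"obius construction carefully within the subspaces $H_{j-1}$, exploiting both the convexity of $D$ and the fact that each $\tau_j$ is measured as a distance inside $H_{j-1}$ rather than in the ambient $\mathbb C^n$.
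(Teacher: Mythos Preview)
Your overall strategy coincides with the paper's: both combine Theorem~\ref{ge convex} with the two-sided estimate $(\ast)$ for $K_D(z)\,p_D^2(z)$ and take ratios. The paper simply cites $(\ast)$ from Nikolov--Pflug \cite{Nik-Pflug-2003}, whereas you supply independent arguments. Your inscribed $\ell^1$-ball computation for the upper bound of $K_D$ is correct and is essentially the Nikolov--Pflug argument.

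For the lower bound of $K_D$, what you flag as ``the main obstacle'' is already resolved in the paper, and no further iteration is needed. In the unitary coordinates you choose (with $p^j=\tau_j e_j$), the supporting hyperplane of $D$ at $p^j$ for $j\ge 2$ need \emph{not} be $\{\Re w_j=\tau_j\}$, so a naive product of one-variable M\"obius maps does not send $D$ into $\mbb D^n$; and after applying the first M\"obius map you lose convexity, so a genuine inductive step inside $H_{j-1}$ does not go through either. The fix is exactly the linear map $A$ of Proposition~\ref{linear maps}: the composition $\phi:=\Psi\circ A\circ T$ used in the proof of Theorem~\ref{ge convex} is injective, satisfies $\phi(0)=0$, has image in $\mbb D^n$ (because $A\circ T(D)\subset\bigcap_j\{\Re Z_j<1\}$), and $|\det\phi'(0)|^2=|\det\Psi'(0)|^2\cdot|\det T'(0)|^2=\dfrac{1}{4^n\,p_D^2(0)}$ since $\det A=1$. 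Plugging this $\phi$ into your Bergman-kernel transformation/monotonicity argument immediately yields $K_D(0)\ge 1/\bigl((4\pi)^n p_D^2(0)\bigr)$ with no residual obstacle.
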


\begin{thm}\label{com C-convex}
Let $D$ be a non-degenerate $\mbb C$-convex domain in $\mbb C^n, n\geq 2$. Then
\begin{align*}
\dfrac{\pi^n}{(2n)! (8n)^n}\leq \dfrac{v_D(z)}{K_D(z)} \leq \left(\dfrac{16\pi (4^n-1)}{3}\right)^n,
\end{align*}
for $v_D=c_D$ or $k_D$.
\end{thm}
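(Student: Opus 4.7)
The strategy parallels the outline for Theorem~\ref{com convex}: I would combine the volume-element estimates from Theorem~\ref{ge C-convex} with analogous two-sided geometric bounds on the Bergman kernel. Explicitly, I would aim to establish, for every non-degenerate $\mbb C$-convex $D\subset\mbb C^n$ and every $z\in D$, a sandwich of the form
\[
\frac{1}{(16\pi)^n\,p_D^2(z)} \;\leq\; K_D(z) \;\leq\; \frac{(2n)!}{(2\pi)^n\,p_D^2(z)}.
\]
Once this is in place, writing $v_D/K_D = (v_D p_D^2)/(K_D p_D^2)$ and dividing the extremal values of $v_D p_D^2$ from Theorem~\ref{ge C-convex} by the corresponding ones for $K_D p_D^2$ yields, after a short arithmetic check, precisely the constants $\pi^n/((2n)!(8n)^n)$ and $(16\pi(4^n-1)/3)^n$ asserted by the theorem.

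For the upper bound on $K_D(z)$, I would use monotonicity of the Bergman kernel together with the construction preceding Theorem~\ref{ge convex}. In the orthonormal frame adapted to $p^1,\ldots,p^n\in\pa D$ and $H_0,\ldots,H_{n-1}$, one can place inside $D$ a distinguished polydisc (or, more efficiently, an ellipsoid) centered at $z$ with polyradii, respectively axes, proportional to the $\tau_{j,D}(z)$. Since the Bergman kernel of such a polydisc at its center equals $\pi^{-n}\prod_j r_j^{-2}$ and that of such an ellipsoid equals $n!/(\pi^n\prod_j r_j^2)$, monotonicity translates into a bound of the required form $K_D(z)\leq \mathrm{const}_n/p_D^2(z)$.

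For the lower bound on $K_D(z)$, the natural tool is the universal inequality
\[
K_D(z) \;\geq\; \frac{n!}{\pi^n}\,c_D(z).
\]
I would derive this by inserting the Jacobian $f := \det\psi'$ of an injective extremal map $\psi\in\mathcal O(D,\mbb B^n)$ for $c_D(z)$ into the characterization $K_D(z) = \sup\{|f(z)|^2 : f\in A^2(D),\,\|f\|_D\leq 1\}$: a change of variables gives $\|f\|_D^2 = V(\psi(D))\leq V(\mbb B^n)=\pi^n/n!$ while $|f(z)|^2 = c_D(z)$. Injectivity of the extremal map is available for non-degenerate $\mbb C$-convex domains via their tautness. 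Combining with the lower bound $c_D(z)p_D^2(z)\geq 1/(16n)^n$ from Theorem~\ref{ge C-convex} then yields the desired lower bound on $K_D(z)p_D^2(z)$.

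The main obstacle will be tuning the upper-bound construction so that the resulting constant matches the precise form $(2n)!/(2\pi)^n$: a naive inscribed polydisc with polyradii $\tau_{j,D}(z)/4$ (the inclusion guaranteed by $\mbb C$-convexity) only yields $K_D(z)p_D^2(z)\leq (16/\pi)^n$, which is too loose in small dimensions. To recover the sharper target constant one should pass to a distinguished ellipsoid inscribed in $D$, whose Bergman kernel at the center automatically contributes the factor $n!/\pi^n$; the remaining factor $(2n-1)!! = (2n)!/(2^n n!)$ must then be extracted from a careful calibration of the axes against the geometric scaling characteristic of $\mbb C$-convexity. Once this calibration is in place, the rest of the argument is purely arithmetic.
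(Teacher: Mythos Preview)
Your overall strategy coincides with the paper's: combine Theorem~\ref{ge C-convex} with the two-sided Bergman-kernel estimate
\[
\frac{1}{(16\pi)^n}\;\leq\;K_D(z)\,p_D^2(z)\;\leq\;\frac{(2n)!}{(2\pi)^n},
\]
and then take ratios. The paper, however, does not reprove this estimate at all; it simply quotes it from Nikolov--Pflug--Zwonek \cite{Nik-Pflug-Zwonek2}, so its entire argument is two lines once Theorem~\ref{ge C-convex} is available. Your proposal instead tries to rederive the Bergman bounds, and that is where it falls short.

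The concrete gap is in your lower bound for $K_D$. The inequality $K_D(z)\geq (n!/\pi^n)\,c_D(z)$, combined with $c_D(z)\,p_D^2(z)\geq 1/(16n)^n$ from Theorem~\ref{ge C-convex}, yields only
\[
K_D(z)\,p_D^2(z)\;\geq\;\frac{n!}{(16\pi n)^n},
\]
which is strictly \emph{weaker} than the target $1/(16\pi)^n$ for every $n\geq 2$, since $n!<n^n$. The upper bound on $v_D/K_D$ that this produces is $(16\pi n(4^n-1)/3)^n/n!$, off from the stated constant by the factor $n^n/n!$; so the ``short arithmetic check'' does \emph{not} deliver $(16\pi(4^n-1)/3)^n$. (Separately, tautness guarantees \emph{existence} of an extremal map for $c_D$, not its injectivity, so the change-of-variables step $\|\det\psi'\|_D^2=V(\psi(D))$ also needs justification.) For the upper bound on $K_D$ you already flag the difficulty and leave the ``careful calibration'' producing $(2n)!/(2\pi)^n$ unspecified; here too the paper bypasses the construction entirely by citing \cite{Nik-Pflug-Zwonek2}.
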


Nikolov and Thomas proved similar estimates for the ratio of the Carath\'eodory-Eisenman volume element and the Bergman kernel in light of the multidimensional Suita conjecture. In particular they showed that there exist constants $L_n>l_n>0$ depending only on $n$ such that when $D\subset \mbb C^n$ is a non-degenerate convex domain, the following estimates hold (see Corollary~6 of \cite{Nik-Pascal-2019} and the remarks given there at the top of p.~3):
\begin{align}\label{Suita convex}
\left(1/2\right)^{2n} l_n K_D(z)\leq c_D(z) \leq L_n K_D(z).
\end{align}
The exact set of inequalities hold for non-degenerate $\mbb C$-convex domains as well, just by replacing $(1/2)^{2n}$ with $(1/4)^{2n}$ in (\ref{Suita convex}). They proved the above estimates by first establishing a comparability relationship between the Carath\'eodory-Eisenman volume element and the volume of the Carath\'eodory indicatrix, and then using a multidimensional Suita conjecture result from \cite{Blocki-Zwonek-2015}. Through our proofs of Theorem~\ref{com convex} and~\ref{com C-convex}, to some extent, we provide alternative proofs to their results on the Carath\'eodory volume element. Additionally, by considering $v_D=k_D$ in Theorems~\ref{com convex} and \ref{com C-convex}, we demonstrate a version of their result on the Kobayashi volume element as well.

\medskip

\medskip

\noindent \textit{Acknowledgements}: The author would like to thank D. Borah for suggesting this problem. The author is also grateful to Prof. N. Nikolov for his kind inputs over emails which improved the contents of this article.

\section{Preparation}\label{prep}

In this segment we present some preparatory results which will eventually help us in deriving the required bounds and estimates in subsequent sections. Let $D\subset \mbb C^n$ be a non-degenerate domain and $z^0\in D$. Since the objects we are studying (e.g. the volume elements and the quotient invariant) are invariant under translation, without loss of generality we can assume $z^0=0$. Recall the construction of $H_0,\ldots,H_{n-1}$; $p^1,\ldots,p^n$; $\tau_{1,D},\ldots, \tau_{n,D}$ that we saw in the previous section, and now construct them considering the corresponding base point $z^0=0$. First assume $D$ is convex. For each $p^j$, we fix a real hyperplane $\mathcal{W}_{j-1}$ such that $\left(p^j+\mathcal{W}_{j-1}\right)$ is a supporting hyperplane of $D$ at $p^j$. If $D$ is $\mbb C$-convex, we have already seen in Sect.~\ref{intro} that $D$ is linearly convex. Hence there exists a complex hyperplane $W_{j-1}$ such that $\left(p^j+W_{j-1}\right)$ does not intersect $D$. Note that if $D$ is convex (it is also $\mbb C$-convex), the following complex hyperplanes
\begin{align}\label{hyperplane}
    W_j:= \mathcal{W}_{j} \cap i \mathcal{W}_{j}, \quad  j=0,\ldots, n-1
\end{align}
satisfy the property described in the previous line. 

Next we will see two $\mbb C$-linear maps $A$ and $T$, in the proposition stated below, which will transform our domain $D$ into a nice form. For a proof of this proposition one may refer to \cite{Bharali-Nik}, and we will mostly stick to the notations used therein with very little modifications. Denote by $e_j$ the standard $j$-th unit vector of $\mbb C^n$, $[A]_{std.}$ the matrix representation of a $\mbb C$-linear map $A$ with respect to the standard orthonormal basis $(e_1,\ldots,e_n)$, and $\langle \cdot,\cdot \rangle$ the standard Hermitian inner product on $\mbb C^n$.

\begin{prop}\label{linear maps}
Let $D$ be a non-degenerate $\mbb C$-convex domain in $\mbb C^n$, $n\geq 2$, and assume $0\in D$. For $j=1,\ldots,n$, let $p^j\in \pa D$ be as described above. Consider the invertible linear transformation $T$ given by
\[
T(z):=\sum\limits_{j=1}^n \dfrac{\langle z,p^j\rangle}{\|p^j\|^2}e_j.
\]
Fix the complex hyperplanes $W_j$ as described above, and if $D$ is convex, determine $W_j$ by the real hyperplanes $\mathcal{W}_j$ as given in (\ref{hyperplane}). Then, there exists $\mbb C$-linear transformation $A$ such that $[A]_{\text{std.}}:=[\al_{j,k}]$ is a lower triangular matrix each of whose diagonal entries is 1 and such that:
\begin{itemize}
    \item[(i)] If $D$ is convex, then
    \[ 
    A \circ T \left(p^j+\mathcal{W}_{j-1}\right)=\left\{(Z_1, \ldots, Z_n)\in \mbb C^n: \Re Z_j=1\right\}, \quad j=1,\ldots,n.
    \]
    \item[(ii)] If $D$ is $\mbb C$-convex (but not necessarily convex), then
    \[ 
    A \circ T \left(p^j+W_{j-1}\right)=\left\{(Z_1, \ldots, Z_n)\in \mbb C^n: Z_j=1\right\}, \quad j=1,\ldots,n.
    \]
\end{itemize}
Moreover, $|\al_{j,k}|\leq 1$ for $j=2,\ldots,n$, and $k=1,\ldots,j-1$ in both (i) and (ii). 
\end{prop}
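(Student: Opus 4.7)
The plan is to build the matrix $A$ by computing what $T$ does to the complex hyperplanes $W_{j-1}$, and then to read off the bound $|\al_{j,k}|\leq 1$ from the closest-point geometry of the $p^j$'s.

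First I would note that the iterative construction makes $p^1,\ldots,p^n$ pairwise $\langle\cdot,\cdot\rangle$-orthogonal: for $k>j$ one has $p^k\in H_{k-1}\subset H_j$, and $H_j$ is by definition the complex orthogonal complement of $p^j$ inside $H_{j-1}$. Hence $T$ is simply the $\mbb C$-linear isomorphism sending the orthogonal basis $(p^1,\ldots,p^n)$ to $(e_1,\ldots,e_n)$, and in the resulting $Z$-coordinates the subspace $H_j$ becomes $\{Z_1=\cdots=Z_j=0\}$ while $p^j$ becomes $e_j$.

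The key structural step is the containment $W_{j-1}\supset H_j$. In case (ii) I would prove the following sublemma: if $\Om\subset\mbb C^m$ is a $\mbb C$-convex domain containing $0$ and $q\in\pa\Om$ realises $\dist(0,\pa\Om)$, then the unique complex hyperplane through $q$ disjoint from $\Om$ is $q+\{\langle\cdot,q\rangle=0\}$. Indeed any separating complex hyperplane must miss $B(0,\|q\|)\subset\Om$, hence its foot of perpendicular from $0$ has norm $\geq\|q\|$; since it passes through $q$ of norm $\|q\|$, that foot must be $q$ itself, forcing orthogonality to $q$. Applying this sublemma to the $\mbb C$-convex slice $D\cap H_{j-1}$ yields $(p^j+W_{j-1})\cap H_{j-1}=p^j+H_j$ and hence $H_j\subset W_{j-1}$. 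In case (i) the same conclusion follows because the real supporting hyperplane $\mathcal{W}_{j-1}$ is real-orthogonal to $p^j$ at $p^j$, and $H_j$ is a complex subspace of that real orthogonal complement, so $H_j\subset\mathcal{W}_{j-1}$ and therefore $H_j\subset\mathcal{W}_{j-1}\cap i\mathcal{W}_{j-1}=W_{j-1}$. Consequently $T(W_{j-1})$ is a complex hyperplane containing $\{Z_1=\cdots=Z_j=0\}$ but not $e_j$, so it has the explicit form
\[
T(W_{j-1})=\bigl\{Z\in\mbb C^n:Z_j+\textstyle\sum_{k<j}\la_k^{(j)}Z_k=0\bigr\}
\]
for unique $\la_k^{(j)}\in\mbb C$. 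I would then \emph{define} $A$ by $\al_{j,k}:=\la_k^{(j)}$ for $k<j$, $\al_{j,j}:=1$, and $\al_{j,k}:=0$ for $k>j$. This $A$ is lower triangular with unit diagonal; by construction $(A\circ T)(W_{j-1})=\{Z_j=0\}$ and $(Ae_j)_j=1$, so $(A\circ T)(p^j+W_{j-1})=\{Z_j=1\}$, proving (ii). For (i), I would check that the $\mbb C$-linear functional cutting out $W_{j-1}$ has the form $\ell(z)=\langle z,v\rangle$ with $v=p^j+\sum_{k<j}c_kp^k$ (the coefficient of $p^j$ is forced to be real and nonzero by a short computation inside $H_{j-1}$, so one may normalise it to $1$), and then $\ell(z)=\|p^j\|^2(A\circ Tz)_j$, whence $\mathcal{W}_{j-1}=\{\Re\ell=0\}$ is sent to $\{\Re Z_j=0\}$.

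For the bound $|\al_{j,k}|\leq 1$ with $k<j$, I would exploit the analytic disc $\{rp^k:|r|<1\}$. Because $p^k$ is the nearest point of $\pa D\cap H_{k-1}$ to $0$, the ball $B_{H_{k-1}}(0,\|p^k\|)$ is disjoint from $\pa D$, and by connectedness of $D\cap H_{k-1}$ it lies inside $D$; in particular $rp^k\in D$ for $|r|<1$. Under $A\circ T$ this disc becomes $\{re_k:|r|<1\}$, on which the $\mbb C$-linear functional $Z\mapsto Z_j+\sum_{l<j}\la_l^{(j)}Z_l$ takes the value $\la_k^{(j)}r$. Since the affine hyperplane obtained by setting that functional equal to $1$ (or, in case (i), its real part equal to $1$) is the $A\circ T$-image of $p^j+W_{j-1}$ (respectively $p^j+\mathcal{W}_{j-1}$) and therefore misses $(A\circ T)(D)$, the open disc of radius $|\la_k^{(j)}|$ about $0$ cannot reach $1$; this forces $|\la_k^{(j)}|\leq 1$. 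The main obstacle I anticipate is the structural step $W_{j-1}\supset H_j$, which in the $\mbb C$-convex case rests on the uniqueness of the separating complex hyperplane at a nearest boundary point; the remaining pieces are essentially bookkeeping with the explicit change of basis $T$.
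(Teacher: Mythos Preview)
The paper does not give its own proof of this proposition; it simply refers the reader to \cite{Bharali-Nik}. Your argument is essentially the construction carried out there, and it is correct in outline. Two points deserve tightening.

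First, in case~(i) your assertion that ``$\mathcal{W}_{j-1}$ is real-orthogonal to $p^j$'' is only correct for $j=1$. For $j\geq 2$, the point $p^j$ realises the distance from $0$ to $\partial D\cap H_{j-1}$, not to $\partial D$, so a supporting real hyperplane of $D$ at $p^j$ need not be orthogonal to $p^j$ in all of $\mbb C^n$. What \emph{is} true is that $(p^j+\mathcal W_{j-1})\cap H_{j-1}$ supports $D\cap H_{j-1}$ at its nearest boundary point $p^j$, hence equals the real-orthogonal complement of $p^j$ inside $H_{j-1}$; this already gives $H_j\subset\mathcal W_{j-1}\cap H_{j-1}\subset\mathcal W_{j-1}$, so your conclusion survives. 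The same restricted-orthogonality reasoning is what forces the coefficient of $p^j$ in the normal vector $v$ to be real.

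Second, in the bound step you write ``under $A\circ T$ this disc becomes $\{re_k:|r|<1\}$'' and then evaluate the functional $Z_j+\sum_{l<j}\la_l^{(j)}Z_l$. These two statements live in different coordinate systems: under $T$ the disc is $\{re_k\}$ and your functional (which cuts out $T(W_{j-1})$) takes the value $\la_k^{(j)}r$; under $A\circ T$ the disc is $\{rAe_k\}$ and the correct quantity is simply its $j$-th coordinate $r\al_{j,k}$, which must avoid $1$ (respectively satisfy $\Re(r\al_{j,k})<1$) because $\{Z_j=1\}$ (respectively $\{\Re Z_j=1\}$) misses $A\circ T(D)$. Either route gives $|\al_{j,k}|\leq 1$, but you should pick one and be consistent.
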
 
Here the linear transformation $T$ is basically a composition of a unitary map that sends $p^j$ to a point on the positive $\Re z_j$ axis, and an anisotropic dilation that stretches each coordinate in such a way that eventually $p^j$ is mapped to $e_j$, for each $j=1,\ldots,n$. By this construction, it is clear that the discs $\mbb D e_j\subset T(D)$ for $j=1,\ldots,n$. Therefore, it follows from \cite[Lemma 15]{Nik-Pflug-Zwonek2} that
\begin{align*}
    E_n:=\left\{(w_1,\ldots,w_n)\in \mbb C^n: |w_1|+\cdots+|w_n|<1\right\}\subset T(D).
\end{align*}
Note that the above inclusion is obvious when $D$ is a convex domain. 

We finally need the following lemma which will be crucial in obtaining the exact bounds in the results listed in Sect.~\ref{intro}. Here we fix $c_n:=\sqrt{4^n-1}/\sqrt{3}$.

\begin{lem}
Let $D\subset \mbb C^n$ be a domain as given in Proposition~\ref{linear maps}. With the notations used in Proposition~\ref{linear maps} and the domain $E_n$ as defined above, we have
\begin{align*}
    \left(1/c_n\right)\mbb B^n \subset A(E_n).
\end{align*}
\end{lem}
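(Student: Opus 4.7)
The plan is to show the equivalent inclusion $A^{-1}\bigl((1/c_n)\mathbb{B}^n\bigr)\subset E_n$. Since $[A]_{\text{std.}} = [\alpha_{j,k}]$ is lower triangular with unit diagonal, so is $A^{-1}$, and writing $z = Aw$ gives the triangular relation
\[
z_j = w_j + \sum_{k=1}^{j-1}\alpha_{j,k} w_k \quad (j=1,\ldots,n),
\]
which we can solve forward for $w_j$. Using the hypothesis $|\alpha_{j,k}| \leq 1$, this yields the pointwise estimate $|w_j| \leq |z_j| + \sum_{k=1}^{j-1} |w_k|$.

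Set $s_j := \sum_{k=1}^{j} |w_k|$; the bound above becomes $s_j \leq 2 s_{j-1} + |z_j|$ with $s_0 = 0$. A straightforward induction gives the explicit form
\[
s_n \;\leq\; \sum_{j=1}^n 2^{n-j}\,|z_j|.
\]
This is the main computation, and the only small subtlety is keeping track of the geometric doubling.

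Next, I would apply the Cauchy--Schwarz inequality on the right-hand side:
\[
\left(\sum_{j=1}^n 2^{n-j}|z_j|\right)^{2} \leq \left(\sum_{j=1}^n 4^{n-j}\right)\left(\sum_{j=1}^n |z_j|^2\right) = \frac{4^n-1}{3}\,\|z\|^2 = c_n^2\,\|z\|^2,
\]
so that $\sum_{j=1}^n |w_j| = s_n \leq c_n\|z\|$. Thus whenever $\|z\| < 1/c_n$, the preimage $w = A^{-1}(z)$ satisfies $\sum_j |w_j| < 1$, i.e.\ $w \in E_n$. This proves $(1/c_n)\mathbb{B}^n \subset A(E_n)$.

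I do not expect any serious obstacle here: the work is entirely linear-algebraic and relies only on the lower-triangular structure of $A$, the unit diagonal, and the uniform bound $|\alpha_{j,k}|\leq 1$ from Proposition~\ref{linear maps}. The only choice to make is whether to estimate $w$ from $z$ or vice versa; going from $z$ to $w$ via forward substitution is the cleaner route, and the geometric-series constant $(4^n-1)/3$ that emerges from Cauchy--Schwarz is exactly the quantity $c_n^2$ used to define the constant in the statement.
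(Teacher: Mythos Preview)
Your proof is correct and follows essentially the same route as the paper: both reduce to showing $A^{-1}\bigl((1/c_n)\mathbb{B}^n\bigr)\subset E_n$ by establishing $\sum_{j=1}^n|w_j|\leq \sum_{j=1}^n 2^{n-j}|z_j|\leq c_n\|z\|$, the last step via Cauchy--Schwarz. The only cosmetic difference is that the paper bounds the entries of $A^{-1}$ directly by counting monomials, whereas you reach the identical intermediate inequality through the forward-substitution recursion $s_j\leq 2s_{j-1}+|z_j|$; the two derivations are equivalent.
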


\begin{proof}
A proof of this result can be found in \cite{Bharali-Nik}. However, we present a sketch of the proof here to enhance readability. Staying true to the coordinates used in Proposition~\ref{linear maps}, we denote the coordinates of a point in $A\circ T(D)$ by $Z:=(Z_1,\ldots,Z_n)$. Write 
\begin{align*}
(w_1,\ldots,w_n):= A^{-1} (Z),
\end{align*}
which will be our coordinates for points in $T(D)$. Since $A=[\al_{j,k}]$ is a lower triangular matrix with diagonal entries 1, $A^{-1}:=[\be_{j,k}]$ will also turn out to be a lower triangular matrix with each diagonal entry 1 and for $j>k$,
\begin{itemize}
\item[($\ast$)] $\be_{j,k}$ will be a sum of $2^{j-k-1}$ monomials those are products of the terms in the set $\cup_{r>s} \{\al_{r,s}\}$.
\end{itemize}
As $|\al_{r,s}|\leq 1$ (given in Proposition~\ref{linear maps}), and since $(w_1,\ldots,w_n):= A^{-1} (Z_1,\ldots,Z_n)$, the previous remark ($\ast$) implies
\begin{align*}
& w_1=Z_1, \quad \text{and}\\
|& w_j|\leq 2^{j-2}\left|Z_1\right|+ 2^{j-3} |Z_2|+\cdots+ 2^0 |Z_{j-1}|+|Z_j|\quad \text{for}\,\,\, j=2,\ldots,n.
\end{align*}
Therefore,
\begin{align}\label{eq 20}
|w_1|+\cdots+|w_n|& \leq |Z_1|+ \sum\limits_{j=2}^n \left(|Z_j|+\sum_{k=1}^{j-1} 2^{j-k-1}|Z_k|\right)\\ \nonumber
&=\sum_{j=1}^n 2^{n-j}|Z_j|.
\end{align}
One obtains using the Cauchy--Schwarz inequality that
\begin{align*}
2^{n-1}|Z_1|+2^{n-2}|Z_2|+\cdots+|Z_n| \leq \sqrt{4^{n-1}+4^{n-2}+\cdots+1} \|Z\|,
\end{align*}
and hence the inequality in (\ref{eq 20}) implies
\begin{align}\label{eq 21}
|w_1|+\cdots+|w_n|& \leq \sqrt{4^{n-1}+4^{n-2}+\cdots+1} \|Z\|= c_n \|Z\|.
\end{align}
Note that when $c_n\|Z\|<1$, according to (\ref{eq 21}), we have $(w_1,\ldots,w_n)\in E_n$. In other words, 
\begin{align*}
A^{-1}\big((1/c_n)\mbb B^n\big)\subset E_n,
\end{align*}
which proves our claim.
\end{proof}

\section{Bounds on convex domains}

In this section we prove Theorem~\ref{ge convex} and Theorem~\ref{com convex}.

\begin{proof}[Proof of Theorem~\ref{ge convex}]
Since both the volume elements are invariant under translation, without loss of generality assume $D$ contains the origin and that $z=0$. As both $A$ and $T$ described in Sect.~\ref{prep} are biholomorphisms, we have
\begin{align}\label{eq 16}
v_D(z)=\left|\det (A\circ T)'(0)\right|^2 v_{A\circ T (D)}(0),
\end{align}
for $v=c$ or $k$. Note that, since $\det A=1$, $\det (A\circ T)'(0)=\det T'(0)$. Denoting the points $p^j:=\left(p^j_1,\ldots,p^j_n\right)$ for $j=1,\ldots,n$, one easily sees
\begin{equation}\label{eq 15}
T'(0)= 
\begin{pmatrix}
\dfrac{p_1^1}{\|p^1\|^2} & \cdots  & \dfrac{p^1_n}{\|p^1\|^2}\\
\vdots & \ddots & \vdots\\
\dfrac{p_1^n}{\|p^n\|^2} & \cdots  & \dfrac{p^n_n}{\|p^n\|^2}
\end{pmatrix}.
\end{equation}
Since $\{p^1,\ldots,p^n\}$ are orthogonal vectors, we have
\begin{align*}
\det
\begin{pmatrix}
p_1^1 & \cdots  & p^1_n\\
\vdots & \ddots & \vdots\\
p_1^n & \cdots  & p^n_n
\end{pmatrix}
=\|p^1\|\cdots \|p^n\|.
\end{align*}
Using this value in (\ref{eq 15}), one obtains 
\begin{align}\label{eq 19}
\det T'(0)=\dfrac{1}{\|p^1\|\cdots \|p^n\|},
\end{align}
and consequently Eq.~(\ref{eq 16}) becomes
\begin{align}\label{eq 17}
v_D(z) p_D^2(z)= v_{A\circ T (D)}(0).
\end{align}
We now shift our focus into computing the bounds for $v_{A\circ T(D)}(0)$, for $v=c$ or $k$. We have already seen that $E_n\subset T(D)$ and $(1/c_n)\mbb B^n \subset A(E_n)$, which together imply $(1/c_n)\mbb B^n \subset A\circ T(D)$. Therefore the monotonicity property of the volume elements gives us
\begin{align}\label{eq 2}
v_{A\circ T(D)}(0)\leq v_{(1/c_n)\mbb B^n}(0).
\end{align} 
The map $z\mapsto c_nz$ is a biholomorphism between $(1/c_n)\mbb B^n$ and $\mbb B^n$ fixing the origin, and hence by the transformation rule (the equality in (\ref{ineq})) one obtains
\begin{align}\label{eq 3}
v_{(1/c_n)\mbb B^n}(0)=c_n^{2n} v_{\mbb B^n}(0)=c_n^{2n}.
\end{align}
Using Eq.~(\ref{eq 3}) and putting the value of $c_n$ in (\ref{eq 2}), we arrive at
\begin{align}\label{eq 4}
v_{A\circ T(D)}(0)\leq \left(\dfrac{4^n-1}{3}\right)^n \quad \text{for}\;\, v= c \;\text{or}\; k.
\end{align}
Now, the following map
\[
\Psi(z)=\left(\dfrac{z_1}{2-z_1},\ldots,\dfrac{z_n}{2-z_n}\right)
\]
can be checked to be a biholomorphism between $\left\{(z_1,\ldots,z_n)\in \mbb C^n: \Re z_j<1,\, j=1,\ldots,n\right\}$ and the unit polydisc $\mbb D^n$. It follows from Proposition~\ref{linear maps} that
\[
A\circ T(D)\subset \bigcap\limits_{j=1}^n \big\{z\in \mbb C^n: \Re z_j<1\big\},
\]
and hence we obtain
\begin{align*}
\Psi\circ A\circ T(D)\subset \Psi\left(\bigcap\limits_{j=1}^n \big\{z\in \mbb C^n: \Re z_j<1\big\}\right)=\mbb D^n.
\end{align*}
Therefore, using the transformation rule and monotonicity property of the volume elements,
\begin{align}\label{eq 5}
v_{A\circ T(D)}(0)=|\det \Psi'(0)|^2\, v_{\Psi\circ A\circ T(D)}(0)\geq |\det \Psi'(0)|^2\, v_{\mbb D^n}(0).
\end{align}
Since a scaling of $1/\sqrt{n}$ maps $\mbb D^n$ into $\mbb B^n$, one obtains by (\ref{ineq})
\begin{align}\label{eq 12}
v_{\mbb D^n}(0)\geq \dfrac{1}{n^n} v_{\mbb B^n}(0)=\dfrac{1}{n^n}.
\end{align}
Again, one can check that $\Psi'(0)$ is a diagonal matrix with each diagonal entry 1/2, and thus,
\[
\left|\det \Psi'(0)\right|^2=\dfrac{1}{2^{2n}}.
\]
We put these values in (\ref{eq 5}) which gives
\begin{align}\label{eq 6}
v_{A\circ T(D)}(0)\geq \dfrac{1}{(4n)^n}\quad \text{for}\;\, v= c \;\text{or}\; k.
\end{align}
Finally, using inequalities (\ref{eq 4}) and (\ref{eq 6}) in Eq.~(\ref{eq 17}), we obtain our desired estimates.
\end{proof}

As we have already derived the proposed upper and lower bounds for the volume elements as in Theorem~\ref{ge convex}, the proof of Theorem~\ref{com convex} will become evident by using a result of Nikolov and Pflug on the geometric estimates for the Bergman kernel on convex domains.

\begin{proof}[Proof of Theorem~\ref{com convex}]
For a non-degenerate convex domain $D\subset \mbb C^n, n\geq 2$, recall that in the previous proof we obtained
\begin{align*}
\dfrac{1}{(4n)^n} \leq v_D(z) p_D^2(z) \leq \left(\dfrac{4^n-1}{3}\right)^n.
\end{align*}
In \cite{Nik-Pflug-2003}, Nikolov and Pflug established the following inequalities to bound the Bergman kernel on any non-degenerate convex domain $D\subset \mbb C^n$:
\begin{align*}
\dfrac{1}{(4\pi)^n} \leq K_D(z) p_D^2(z) \leq \dfrac{(2n)!}{(2\pi)^n}.
\end{align*}
The estimates in Theorem~\ref{com convex} now become apparent by considering the previous two sets of inequalities and taking their ratios appropriately.
\end{proof}

\section{Bounds on $\mbb C$-convex domains}

In this section we prove Theorem~\ref{ge C-convex} and Theorem~\ref{com C-convex}.

\begin{proof}[Proof of Theorem~\ref{q C-convex}]
Here $D$ is a non-degenerate $\mbb C$-convex domain in $\mbb C^n$ and $z\in D$. As before, we assume $z=0$ by simply translating the domain. Since the map $A\circ T$ is a biholomorphism, we have
\begin{align}\label{eq 18}
v_D(z)=\left|\det (A\circ T)'(0)\right|^2 v_{A\circ T (D)}(0).
\end{align}
In the proof of Theorem~\ref{ge convex} we have seen that 
\begin{align*}
\left|\det (A\circ T)'(0)\right|^2= \dfrac{1}{\|p^1\|^2\cdots \|p^n\|^2},
\end{align*}
and thus Eq.~(\ref{eq 18}) becomes
\begin{align}\label{eq 22}
v_D(z) p_D^2(z)= v_{A\circ T (D)}(0).
\end{align}
Therefore we need to study the bounds for $v_{A\circ T(D)}(0)$, for $v=c$ or $k$. By the construction of $T$, the discs $\mbb D e_j\subset T(D)$ for $j=1,\ldots,n$. The $\mbb C$-convexity of $D$ implies $D$ is linearly convex, and therefore Lemma~15 of \cite{Nik-Pflug-Zwonek2} gives
\[
E_n\subset T(D).
\]
Again, as indicated in Sect.~\ref{prep}, we have $(1/c_n)\mbb B^n \subset A(E_n)$, and therefore $(1/c_n)\mbb B^n \subset A\circ T(D)$. Thus, owing to the monotonicity property of the volume elements,
\begin{align}\label{eq 8}
v_{A\circ T(D)}(0)\leq v_{(1/c_n)\mbb B^n}(0).
\end{align} 
In (\ref{eq 3}) we have already calculated 
\[
v_{(1/c_n)\mbb B^n}(0)=c_n^{2n}.
\]
Putting this in Eq.~(\ref{eq 8}) along with the value of $c_n$, we obtain
\begin{align}\label{eq 9}
v_{A\circ T(D)}(0)\leq \left(\dfrac{4^n-1}{3}\right)^n \quad \text{for}\;\, v= c \;\text{or}\; k.
\end{align}
Now, let us denote $\pi_j$ to be the projection of $\mbb C^n$ onto the $j$-th coordinate. Since complex affine maps preserve $\mbb C$-convexity, $A\circ T(D)$ is also a $\mbb C$-convex domain. Therefore, by Theorem~2.3.6 of \cite{APS}, $\Om_j:=\pi_j \left(A \circ T(D)\right)$ is a simply connected domain in the $z_j$-plane. Clearly
\begin{align*}
A\circ T(D)\subset \prod_{j=1}^n \Om_j.
\end{align*}
We now consider Riemann maps $\phi_j$ those transform $(\Om_j,0)$ onto $(\mbb D, 0)$ for $j=1,\ldots,n$. Note that domains $\Om_j$ satisfy $1\in \pa \Om_j$, and hence $\dist(0, \pa \Om_j)\leq 1$ for each $j=1,\ldots,n$. The Koebe 1/4 theorem then implies that
\begin{align}\label{eq 13}
\left|\phi_j'(0)\right|\geq \dfrac{1}{4} \quad \text{for}\;\, j=1,\ldots,n.
\end{align}
Next, consider the following map
\begin{align*}
\Phi(z_1,\ldots,z_n)=\big(\phi_1(z_1),\ldots, \phi_n(z_n)\big).
\end{align*}
It is obvious that $\Phi$ is a biholomorphism between $\prod_{j=1}^n \Om_j$ and $\mbb D^n$. We have already seen that $A\circ T(D)\subset \prod_{j=1}^n \Om_j$, and consequently we obtain
\begin{align*}
\Phi\circ A\circ T(D)\subset \Phi\left(\prod_{j=1}^n \Om_j\right)=\mbb D^n.
\end{align*}
Therefore, using the transformation rule and monotonicity property of the volume elements,
\begin{align}\label{eq 11}
v_{A\circ T(D)}(0)=\left|\det \Phi'(0)\right|^2\, v_{\Phi\circ A\circ T(D)}(0)\geq \left|\det \Phi'(0)\right|^2\, v_{\mbb D^n}(0).
\end{align}
We have already established $v_{\mbb D^n}(0)\geq 1/n^n$ (see (\ref{eq 12})). Again, note that
\begin{align*}
\left|\det \Phi'(0)\right|^2=\left|\phi_1'(0)\right|^2\cdots \left|\phi_n'(0)\right|^2,
\end{align*}
and by virtue of (\ref{eq 13}) we have
\begin{align*}
\left|\det \Phi'(0)\right|^2 \geq \dfrac{1}{4^{2n}}.
\end{align*}
Now put these values in (\ref{eq 11}) which gives
\begin{align}\label{eq 14}
v_{A\circ T(D)}(0)\geq \dfrac{1}{(16n)^n}\quad \text{for}\;\, v= c \;\text{or}\; k.
\end{align}
Finally, using inequalities (\ref{eq 9}) and (\ref{eq 14}) in Eq.~(\ref{eq 22}), we obtain our desired estimates.
\end{proof}

Similar to the proof of Theorem~\ref{com convex}, here we will use a result of Nikolov, Pflug and Zwonek, in the proof of Theorem~\ref{com C-convex}, which deals with the geometric estimates for the Bergman kernel on $\mbb C$-convex domains.

\begin{proof}[Proof of Theorem~\ref{com C-convex}]
By virtue of Theorem~\ref{ge C-convex} we have
\begin{align*}
\dfrac{1}{(16n)^n} \leq v_D(z) p_D^2(z) \leq \left(\dfrac{4^n-1}{3}\right)^n,
\end{align*}
for a non-degenerate $\mbb C$-convex domain $D\subset \mbb C^n$, for $n\geq 2$. Moreover, Nikolov \textit{et al.} in \cite{Nik-Pflug-Zwonek2} established the following inequalities to bound the Bergman kernel on any non-degenerate $\mbb C$-convex domain $D\subset \mbb C^n$:
\begin{align*}
\dfrac{1}{(16\pi)^n} \leq K_D(z) p_D^2(z) \leq \dfrac{(2n)!}{(2\pi)^n}.
\end{align*}
The estimates in Theorem~\ref{com C-convex} can now be easily deduced by considering the previous two sets of inequalities and taking their ratios appropriately.
\end{proof}

\noindent \textbf{Funding:} The author was supported by the National Board for Higher Mathematics (NBHM) Post-doctoral Fellowship (File no. 0204/10(7)/2023/R{\&}D-II/2777) and the Post-doctoral program at Indian Institute of Science, Bangalore.\\
 
\noindent \textbf{Data availability:} Data sharing is not applicable to this article as no datasets were generated or analysed during the current study.\\

\noindent \textbf{Conflict of interest:} The author declares that he has no conflict of interest.

\begin{bibdiv}
\begin{biblist}

\bib{APS}{book}{
   author={Andersson, M.},
   author={Passare, M.},
   author={Sigurdsson, R.},
   title={Complex convexity and analytic functionals},
   series={Progress in Mathematics},
   volume={225},
   publisher={Birkh\"{a}user Verlag, Basel},
   date={2004},
   pages={xii+160},
   doi={10.1007/978-3-0348-7871-5},
}



\bib{Bharali-Nik}{article}{
   author={Bharali, G.},
   author={Nikolov, N.},
   title={Explicit universal bounds for squeezing functions of ($\mathbb{C}$-)convex domains},
   journal={arXiv:2310.08385},
}

\bib{Blocki-Zwonek-2015}{article}{
   author={B\l ocki, Z.},
   author={Zwonek, W.},
   title={Estimates for the Bergman kernel and the multidimensional Suita
   conjecture},
   journal={New York J. Math.},
   volume={21},
   date={2015},
   pages={151--161},
}

\bib{Borah-Kar-1}{article}{
   author={Borah, D.},
   author={Kar, D.},
   title={Boundary behavior of the Carath\'{e}odory and Kobayashi-Eisenman
   volume elements},
   journal={Illinois J. Math.},
   volume={64},
   date={2020},
   number={2},
   pages={151--168},
   doi={10.1215/00192082-8303461},
}

\bib{Chen-1989}{book}{
   author={Chen, J.-H.},
   title={Estimates of the invariant metrics on convex domains in C(n)},
   note={Thesis (Ph.D.)--Purdue University},
   publisher={ProQuest LLC, Ann Arbor, MI},
   date={1989},
   pages={41},
}
		
\bib{Cheung-Wong}{article}{
   author={Cheung, W. S.},
   author={Wong, B.},
   title={An integral inequality of an intrinsic measure on bounded domains
   in ${\bf C}^n$},
   journal={Rocky Mountain J. Math.},
   volume={22},
   date={1992},
   number={3},
   pages={825--836},
   doi={10.1216/rmjm/1181072698},
}

\bib{Dek}{article}{
   author={Dektyarev, I. M.},
   title={Criterion for the equivalence of hyperbolic manifolds},
   language={Russian},
   journal={Funktsional. Anal. i Prilozhen.},
   volume={15},
   date={1981},
   number={4},
   pages={73--74},
}

\bib{DGZ}{article}{
   author={Deng, F.},
   author={Guan, Q.},
   author={Zhang, L.},
   title={Properties of squeezing functions and global transformations of bounded domains},
   journal={arXiv:1302.5307},
}

\bib{Gr-Wu}{article}{
   author={Graham, I.},
   author={Wu, H.},
   title={Characterizations of the unit ball $B^n$ in complex Euclidean
   space},
   journal={Math. Z.},
   volume={189},
   date={1985},
   number={4},
   pages={449--456},
   doi={10.1007/BF01168151},
}

\bib{Gr-Kr1}{article}{
   author={Greene, R. E.},
   author={Krantz, S. G.},
   title={Characterizations of certain weakly pseudoconvex domains with
   noncompact automorphism groups},
   conference={
      title={Complex analysis},
      address={University Park, Pa.},
      date={1986},
   },
   book={
      series={Lecture Notes in Math.},
      volume={1268},
      publisher={Springer, Berlin},
   },
   date={1987},
   pages={121--157},
   doi={10.1007/BFb0097301},
}

\bib{Jarn-Pflug-2013}{book}{
   author={Jarnicki, M.},
   author={Pflug, P.},
   title={Invariant distances and metrics in complex analysis},
   series={De Gruyter Expositions in Mathematics},
   volume={9},
   edition={Second extended edition},
   publisher={Walter de Gruyter GmbH \& Co. KG, Berlin},
   date={2013},
   pages={xviii+861},
   doi={10.1515/9783110253863},
}

\bib{Ma}{article}{
   author={Ma, D.},
   title={Boundary behavior of invariant metrics and volume forms on
   strongly pseudoconvex domains},
   journal={Duke Math. J.},
   volume={63},
   date={1991},
   number={3},
   pages={673--697},
   doi={10.1215/S0012-7094-91-06328-3},
}



\bib{McNeal-2001}{article}{
   author={McNeal, J. D.},
   title={Invariant metric estimates for $\overline\partial$ on some
   pseudoconvex domains},
   journal={Ark. Mat.},
   volume={39},
   date={2001},
   number={1},
   pages={121--136},
   doi={10.1007/BF02388794},
}

\bib{Nik-sq}{article}{
   author={Nikolov, N.},
   title={Behavior of the squeezing function near h-extendible boundary
   points},
   journal={Proc. Amer. Math. Soc.},
   volume={146},
   date={2018},
   number={8},
   pages={3455--3457},
   doi={10.1090/proc/14049},
}

\bib{Nik-Pflug-2003}{article}{
   author={Nikolov, N.},
   author={Pflug, P.},
   title={Estimates for the Bergman kernel and metric of convex domains in
   $\Bbb C^n$},
   journal={Ann. Polon. Math.},
   volume={81},
   date={2003},
   number={1},
   pages={73--78},
   doi={10.4064/ap81-1-6},
}

\bib{Nik-Pflug-Zwonek}{article}{
   author={Nikolov, N.},
   author={Pflug, P.},
   author={Zwonek, W.},
   title={An example of a bounded ${\bf C}$-convex domain which is not
   biholomorphic to a convex domain},
   journal={Math. Scand.},
   volume={102},
   date={2008},
   number={1},
   pages={149--155},
   doi={10.7146/math.scand.a-15056},
}

\bib{Nik-Pflug-Zwonek2}{article}{
   author={Nikolov, N.},
   author={Pflug, P.},
   author={Zwonek, W.},
   title={Estimates for invariant metrics on $\Bbb C$-convex domains},
   journal={Trans. Amer. Math. Soc.},
   volume={363},
   date={2011},
   number={12},
   pages={6245--6256},
   doi={10.1090/S0002-9947-2011-05273-6},
}

\bib{Nik-Pascal-2019}{article}{
   author={Nikolov, N.},
   author={Thomas, P. J.},
   title={Comparison of the Bergman kernel and the Carath\'{e}odory-Eisenman
   volume},
   journal={Proc. Amer. Math. Soc.},
   volume={147},
   date={2019},
   number={11},
   pages={4915--4919},
   doi={10.1090/proc/14604},
}

\bib{Ro}{article}{
   author={Rosay, J.-P.},
   title={Sur une caract\'{e}risation de la boule parmi les domaines de ${\bf
   C}^{n}$ par son groupe d'automorphismes},
   journal={Ann. Inst. Fourier (Grenoble)},
   volume={29},
   date={1979},
   number={4},
   pages={ix, 91--97},
}


\bib{Wong}{article}{
   author={Wong, B.},
   title={Characterization of the unit ball in ${\bf C}^{n}$ by its
   automorphism group},
   journal={Invent. Math.},
   volume={41},
   date={1977},
   number={3},
   pages={253--257},
   doi={10.1007/BF01403050},
}


\end{biblist}
\end{bibdiv}

\end{document}